\documentclass[11pt]{article}
\usepackage{graphicx}
\usepackage{amsfonts}
\usepackage{texdraw}
\usepackage{amssymb, amsmath, amsthm}
\usepackage{epsfig}
\usepackage[english]{babel}
\usepackage{latexsym}
\usepackage{amscd}

\setcounter{MaxMatrixCols}{10} \textwidth17cm \textheight235mm
\hoffset-23mm \voffset-28mm

\newtheorem{proposition}{Proposition}

\def\neweq#1{\begin{equation}\label{#1}}
\def\endeq{\end{equation}}
\def\eq#1{(\ref{#1})}

\newcommand{\R}{\mathbb{R}}

\begin{document}

\title{The role of aerodynamic forces\\
in a mathematical model for suspension bridges}

\author{Elvise BERCHIO $^\sharp$ - Filippo GAZZOLA $^\dagger$}
\date{}
\maketitle

\begin{center}
{\small $^\sharp$ Dipartimento di Scienze Matematiche-Politecnico di Torino-Corso Duca degli Abruzzi 24-10129 Torino, Italy\\
        $^\dagger$ Dipartimento di Matematica - Politecnico di Milano - Piazza Leonardo da Vinci 32 - 20133 Milano, Italy\\
{\tt elvise.berchio@polito.it, filippo.gazzola@polimi.it}}
\end{center}

\begin{abstract}
In a fish-bone model for suspension bridges previously studied by us in \cite{bergaz} we introduce linear aerodynamic forces. We numerically analyze the role
of these forces and we theoretically show that they do not influence the onset of torsional oscillations. This suggests a new explanation for the origin
of instability in suspension bridges: it is a combined interaction between structural nonlinearity and aerodynamics and it follows a precise pattern.
This gives an answer to a long-standing question about the origin of torsional instability in suspension bridges.\par\noindent
{\em Keywords: suspension bridges, torsional stability, Hill equation.}\par\noindent
{\em Mathematics Subject Classification: 37C20, 35B35, 34C15.}
\end{abstract}

\section{Introduction}
Since the Federal Report \cite{ammann}, it is known that the crucial event causing the collapse of the Tacoma Narrows Bridge was a sudden change from a vertical
to a torsional mode of oscillation. Several studies were done on this topic, see \cite{larsen,pittel,scantom} but a full explanation of the origin of
torsional oscillations is nowadays still missing; see also the updated monograph \cite{scott} and references therein. In two recent papers the onset
of torsional oscillations was attributed to a structural instability.
In \cite{argaz} a model of suspension bridge composed by several coupled (second order) nonlinear oscillators has been proposed. By using
suitable Poincar\'e maps, it has been proved that when enough energy is present within the structure a resonance may occur, leading to an energy
transfer between oscillators, from vertical to torsional. The results in \cite{argaz} are purely numerical. We found a similar answer in \cite{bergaz}
by analyzing a different mathematical model, named {\em fish-bone}. In this model, the main span of the bridge, which has a rectangular shape with two long
edges and two shorter edges, is seen as a degenerate plate fixed and hinged between the towers. The midline of the roadway is seen as a beam
with cross sections that are seen as rods free to rotate around their barycenters located on the beam. The degrees of freedom are the vertical
displacement of the beam $y$, which is positive in the downwards direction, and the angle $\theta$ of rotation of the cross sections with respect to
the horizontal position. The roadway is assumed to have length $L$ and width $2\ell$ with $2\ell\ll L$. By considering the kinetic energy of a rotating
object and the bending energy of a beam, the following system is obtained in \cite{bergaz}:
\begin{equation}\label{system0}
\left\{\begin{array}{ll}
\!\!My_{tt}\!+\!EI y_{xxxx}\!+\!f(y\!+\!\ell\sin \theta)\!+\!f(y\!-\!\ell\sin \theta)\!=\!0\quad & 0<x<L,\ t>0,\\
\!\!\frac{M\ell}{3}\theta_{tt}\!-\!\mu\ell\theta_{xx}\!+\!\cos\theta(f(y\!+\!\ell\sin \theta)\!-\!f(y\!-\!\ell\sin \theta))\!=\!0\quad & 0<x<L,\ t>0,
\end{array}
\right.
\end{equation}
where $M$ is the mass of the rod,  $\mu>0$ is a constant depending on the shear modulus and the moment of inertia of the pure torsion, $EI>0$ is the
flexural rigidity of the beam, $f$ includes the restoring action of the prestressed hangers and the action of gravity.
To \eq{system0} we associate the boundary-initial conditions
\begin{equation}\label{boundaryc}
y(0,t)\!=\!y_{xx}(0,t)\!=\!y(L,t)\!=\!y_{xx}(L,t)\!=\!\theta(0,t)\!=\!\theta(L,t)\!=\!0\qquad t\geq 0,
\end{equation}
\begin{equation}\label{initial0}
y(x,0)\!=\!\eta_0(x),\,y_t(x,0)\!=\!\eta_1(x),\,\theta(x,0)\!=\!\theta_0(x),\,\theta_t(x,0)\!=\!\theta_1(x)\quad 0<x<L.
\end{equation}

For a linear force $f$ the two equations in \eq{system0} decouple: this case was studied in \cite{moore}. In the nonlinear case, well-posedness of the
problem was shown in \cite{holubova}. For a suitable nonlinear $f$, in \cite{bergaz} we gave a detailed explanation of how internal resonances occur
in \eq{system0}, yielding instability. The aim of this analysis was purely qualitative and the bridge was seen as an isolated system with no dissipation
and no interactions with the surrounding air. In particular, both theoretical and numerical results were given proving that the onset of large
torsional oscillations is due to a resonance which generates an energy transfer between different oscillation modes. More precisely, when the bridge
is oscillating vertically with sufficiently large amplitude, part of the energy is suddenly transferred to a torsional mode giving rise to wide
torsional oscillations. Estimates of the energy threshold for stability were obtained both theoretically and numerically, see Section \ref{One}.\par
Our purpose in \cite{bergaz} was to emphasize the structural behavior of the bridge without inserting any interaction with the surrounding air.
This procedure was also followed by Irvine \cite[p.176]{irvine} who comments his own approach by writing:
\begin{center}
\begin{minipage}{6.2in}
{\em In this formulation any damping of structural and aerodynamic origin has been ignored... We could include aerodynamic damping which is perhaps the most
important of the omitted terms. However, this refinement, although frequently of significance, yields a messy flutter determinant that requires a numerical
solution.}
\end{minipage}
\end{center}
This comment says two things. First, that it was a good starting point to study \eq{system0} as an isolated system. Second, that, in order to have more
accurate responses, the subsequent step should be to insert aerodynamic forces in the model. This refinement of the model \eq{system0} was also suggested to us by Paolo Mantegazza, a distinguished aerospace engineer at the Politecnico of Milan, and motivates the present paper. In order to better highlight the role of the aerodynamic forces, we do not insert in the model any other external action. We will show, both numerically and theoretically, that the threshold of instability of the system is independent of aerodynamic forces. This suggests a new pattern for the aerodynamic and structural mechanisms which give rise to oscillations in suspension bridges, see Section \ref{conclusion}.

\section{One mode approximation of the fish-bone model}\label{One}
We first introduce some simplifications of the model which, however, maintain its original essence and its main structural features. First, up to scaling
we may assume that $L=\pi$ and $M=1$. Then, since we are willing to describe how small torsional oscillations may suddenly become larger ones, we use
the following approximations: $\cos\theta \cong 1$ and $\sin \theta \cong \theta$; see \cite{bergaz} for a rigorous justification of this choice.
Since our purpose is merely to describe the qualitative phenomenon, we may take $EI\left(\frac{\pi}{L} \right)^{4}=3\mu\left(\frac{\pi}{L} \right)^{2}=1$ although these parameters may be fairly different in actual bridges. For the same reason, the choice of the nonlinearity is not of fundamental importance;
it is shown in \cite{argaz} that several different nonlinearities yield the same qualitative behavior for the solutions. Whence, as suggested by
Plaut-Davis \cite[Section 3.5]{plautdavis}, we take $f(s)=s+ s^3$. Finally, we set $z:=\ell \theta$ and the system \eq{system0} becomes
\begin{equation}\label{systemgamma}
\left \{ \begin{array}{ll}
y_{tt}+y_{xxxx}+2y(1+ y^2+3 z^2)=0\qquad & 0<x<\pi,\ t>0,\\
z_{tt}-z_{xx}+6z(1+3 y^2+ z^2)=0\qquad & 0<x<\pi,\ t>0\, .
\end{array}\right.
\end{equation}
To \eq{systemgamma} we associate some initial conditions which determine the conserved energy $E$ of the system, that is,
\begin{eqnarray*}
E &=& \frac{\|y_t(t)\|_2^2}{2}+\frac{\|z_t(t)\|_2^2}{6}+\frac{\|y_{xx}(t)\|_2^2}{2}+\frac{\|z_x(t)\|_2^2}{6}\\
\ & \ & +\int_0^\pi \Big(y(x,t)^2+z(x,t)^2+3 z(x,t)^2y(x,t)^2+\frac{y(x,t)^4}{2}+\frac{z(x,t)^4}{2} \Big)\, dx\, .
\end{eqnarray*}
Existence and uniqueness of solutions were proved in \cite{holubova} by performing a suitable Galerkin procedure, see also \cite{bergaz} where
more regularity was obtained. The proof is constructive: to \eq{systemgamma} we associate the functions
\neweq{fouriern}
y^m(x,t)=\sum_{j=1}^m y_j(t)\sin(jx)\ ,\quad z^m(x,t)=\sum_{j=1}^m z_j(t)\sin(jx)
\endeq
and the approximated $m$-mode system
\begin{equation}\label{systemU}
\left \{ \begin{array}{ll}
\ddot{y}_j(t)+j^4 y_j(t)+\frac{4}{\pi}\int_0^\pi y^m(x,t)(1\!+\!y^m(x,t)^2\!+\!3z^m(x,t)^2)\sin(jx)\, dx=0\\
\ddot{z}_j(t)+j^2 z_j(t)+\frac{12}{\pi}\int_0^\pi z^m(x,t)(1\!+\!3y^m(x,t)^2\!+\!z^m(x,t)^2)\sin(jx)\, dx=0
\end{array}\right.
\end{equation}
where $j=1,...,m$. Then, suitable a priori estimates allow to prove that
$$y^m\rightarrow y \text{ in } C^0([0,T];H^2\cap H_0^1(0,\pi))\cap C^1([0,T];L^2(0,\pi))\quad \text{as }m\rightarrow +\infty\, ,$$
$$z^m\rightarrow z \text{ in } C^0([0,T];H_0^1(0,\pi))\cap C^1([0,T];L^2(0,\pi))\quad \text{as }m\rightarrow +\infty$$
for all $T>0$. Hence, the functions in \eq{fouriern} approximate the solutions of \eq{systemgamma}. The error committed when replacing $y$ with $y^m$
and $z$ with $z^m$ can be rigorously estimated, see \cite[Theorem 2]{bergaz}.\par
In what follows we focus our attention to the simplest case $m=1$. Then, system \eq{systemU} reads
\begin{equation}\label{unmodo}
\left\{\begin{array}{ll}
\ddot{y}_1+3y_1+\frac32 y_1^3+\frac92 y_1z_1^2=0\\
\ddot{z}_1+7z_1+\frac92 z_1^3+\frac{27}{2}z_1y_1^2=0\ ,
\end{array}\right.
\end{equation}
with some initial conditions
\neweq{initialone}
y_1(0)=\eta_0\, ,\ \dot{y}_1(0)=\eta_1\, ,\ z_1(0)=\zeta_0\, ,\ \dot{z}_1(0)=\zeta_1\ .
\endeq

If we take $\zeta_0=\zeta_1=0$, then the unique solution of \eq{unmodo}-\eq{initialone} is
$(y_1,z_1)=(\bar y,0)$ with $\bar y=\bar y(\eta_0,\eta_1)$ being the unique (periodic) solution of
\neweq{soloy}
\ddot{y}+3y+\frac32 y^3=0\ ,\qquad y(0)=\eta_0\, ,\ \dot{y}(0)=\eta_1\, .
\endeq
We call $\bar y$ the \emph{first vertical mode} with associated energy
\neweq{energyy}
E(\eta_0,\eta_1)=\frac{\dot{\bar{y}}^2}{2}+\frac32 \bar{y}^2+\frac38 \bar{y}^4\equiv\frac{\eta_1^2}{2}+\frac32 \eta_0^2+\frac38 \eta_0^4\, .
\endeq
Since we are interested in the stability of the solution $z_1\equiv0$ corresponding to $\zeta_0=\zeta_1=0$, we linearize system \eq{unmodo} around $(\bar y,0)$. The torsional component of the linearized system is the following Hill equation \cite{hill}:
\neweq{nonlinearhill}
\ddot{\xi}+a(t)\xi=0\quad\mbox{with}\quad a(t)=7+\frac{27}{2}\overline{y}(t)^2\,.
\endeq
We say that the first vertical mode $\bar y$ at energy $E(\eta_0,\eta_1)$ is {\bf torsionally stable} if the trivial solution of \eq{nonlinearhill}
is stable. By exploiting a stability criterion by Zhukovskii \cite{zhk}, in \cite{bergaz} we obtained the following theoretical estimates.
\begin{proposition}\label{stable}
The first vertical mode $\overline{y}$ at energy $E(\eta_0,\eta_1)$ (that is, the solution of \eqref{soloy})
is torsionally stable provided that
$$\|\overline{y}\|_\infty\le\sqrt{\frac{10}{21}}\approx0.69$$
or, equivalently, provided that
$$E\le\frac{235}{294}\approx0.799\, .$$
\end{proposition}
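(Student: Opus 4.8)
The plan is to verify the hypotheses of Zhukovskii's criterion for the Hill equation \eqref{nonlinearhill}. Recall that this criterion asserts that the trivial solution of $\ddot\xi+a(t)\xi=0$, with $a$ non-constant and $T$-periodic, is stable as soon as there is an integer $n\ge0$ such that $(n\pi/T)^2\le a(t)\le((n+1)\pi/T)^2$ for every $t$. Since the potential $\frac32 y^2+\frac38 y^4$ is even, the vector field of \eqref{soloy} is odd, so the solution $\overline{y}$ satisfies $\overline{y}(t+\tau/2)=-\overline{y}(t)$ and its square has period $T=\tau/2$, where $\tau$ is the minimal period of $\overline{y}$; hence $a(t)=7+\frac{27}{2}\overline{y}(t)^2$ is $T$-periodic with $\pi/T=2\pi/\tau=:\Omega$, the fundamental frequency of $\overline{y}$. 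Clearly $\min_t a=7$ (attained where $\overline{y}=0$) and $\max_t a=7+\frac{27}{2}\|\overline{y}\|_\infty^2$. I would therefore try $n=1$, so that the two inequalities to be checked become
\[
\Omega^2\le 7\qquad\text{and}\qquad 7+\tfrac{27}{2}\|\overline{y}\|_\infty^2\le 4\Omega^2 .
\]

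To evaluate these I need two-sided bounds on $\Omega$, equivalently on $\tau$. Writing $A=\|\overline{y}\|_\infty$ and using conservation of energy $E=\frac32 A^2+\frac38 A^4$, the turning-point factorization $E-\frac32 y^2-\frac38 y^4=(A^2-y^2)\big(\frac32+\frac38(A^2+y^2)\big)$ together with the substitution $y=A\sin\phi$ gives
\[
\tau=2\sqrt2\int_0^{\pi/2}\frac{d\phi}{\sqrt{\tfrac32+\tfrac38 A^2(1+\sin^2\phi)}} .
\]
Bounding the integrand by its values at $\phi=\pi/2$ and $\phi=0$ (that is, replacing $\sin^2\phi$ by $1$ and by $0$) yields the elementary estimate
\[
3+\tfrac34 A^2\le \Omega^2\le 3+\tfrac32 A^2 .
\]

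It then remains to insert these bounds. The lower Zhukovskii inequality is harmless: $\Omega^2\le 3+\frac32 A^2\le 3+\frac32\cdot\frac{10}{21}=\frac{26}{7}<7$. The upper inequality is the binding one: using $\Omega^2\ge 3+\frac34 A^2$, so that $4\Omega^2\ge 12+3A^2$, it suffices that $7+\frac{27}{2}A^2\le 12+3A^2$, i.e. $\frac{21}{2}A^2\le 5$, i.e. $A^2\le\frac{10}{21}$. Thus both hypotheses hold precisely when $\|\overline{y}\|_\infty\le\sqrt{10/21}$, and Zhukovskii's criterion delivers torsional stability; the equivalent energy condition follows because $E=\frac32 A^2+\frac38 A^4$ is strictly increasing in $A^2$ and maps $A^2=\frac{10}{21}$ to $E=\frac{235}{294}$. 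The only delicate point is the period estimate, and it is striking that the crudest two-sided bound on the period integral already reproduces the sharp threshold $10/21$, so that no finer (elliptic-integral) analysis of $\tau$ is needed; the correct band $n=1$ is in turn dictated by $\min_t a=7$ lying between $\Omega^2$ and $4\Omega^2$.
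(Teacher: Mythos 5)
Your proof is correct and follows essentially the same approach the paper takes (deferring to \cite{bergaz}): applying Zhukovskii's stability criterion \cite{zhk} with $n=1$ to the Hill equation \eqref{nonlinearhill}, with the fundamental frequency $\Omega$ of $\overline{y}$ controlled by elementary two-sided bounds on the period integral. Your computations all check out --- the factorization of $E-V(y)$, the estimate $3+\tfrac34 A^2\le\Omega^2\le 3+\tfrac32 A^2$, the resulting threshold $A^2\le\tfrac{10}{21}$, and its translation into $E\le\tfrac{235}{294}$.
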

Proposition \ref{stable} gives a sufficient condition for the torsional stability. The numerical results obtained in \cite{bergaz} show that the threshold of
instability could be larger. We quote a couple of them in Figure 1. We plot the solution of \eq{unmodo} with initial conditions
\neweq{numinitial}
y_1(0)=\|y_1\|_\infty=10^4z_1(0)\, ,\quad\dot{y}_1(0)=\dot{z}_1(0)=0
\endeq
for different values of $\|y_1\|_\infty$. The green plot is $y_1$ and the black plot is $z_1$.
\begin{figure}[ht]
\begin{center}
{\includegraphics[height=26mm, width=52mm]{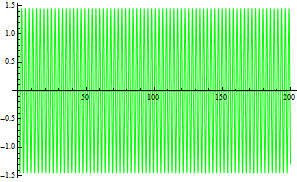}}\qquad\qquad {\includegraphics[height=26mm, width=52mm]{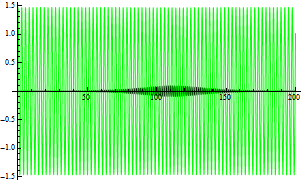}}

{\includegraphics[height=26mm, width=52mm]{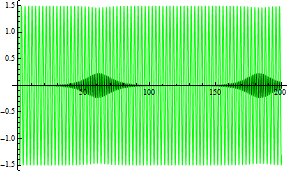}}\qquad\qquad {\includegraphics[height=26mm, width=52mm]{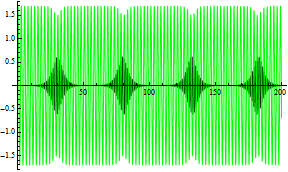}}
\caption{{\small For $t\in[0,200]$ (left to right, top to bottom): the solutions $y_1$ (green) and $z_1$ (black) of \eq{unmodo}-\eq{numinitial} for
$\|y_1\|_\infty=1.45,\, 1.47,\, 1.5,\, 1.7$.}}\label{tantefig}
\end{center}
\end{figure}
For $\|y_1\|_\infty=1.45$ no wide torsion appears, which means that the solution $(y_1,0)$ is torsionally stable. For $\|y_1\|_\infty=1.47$
we see a sudden
increase of the torsional oscillation around $t\approx50$. Therefore, the stability threshold for the vertical amplitude of oscillation lies in the
interval $[1.45,1.47]$. Finer experiments show that the threshold is $\|y_1\|_\infty\approx1.46$, corresponding to a critical energy of about $E\approx4.9$.
For increasing $\|y_1\|_\infty$ the phenomenon is anticipated in time and amplified in magnitude.

\section{How to introduce the aerodynamic forces into the model?}\label{aero}

Even in absence of wind, an aerodynamic force is exerted on the bridge by the surrounding air in which the structure is immersed, and is due to the
relative motion between the bridge and the air. Pugsley \cite[$\S$ 12.7]{pugsley} assumes that the aerodynamic forces depend linearly on the
``cross'' derivatives and functions. Similarly, Scanlan-Tomko \cite{scantom} obtain the following equations satisfied by the torsional angle $\theta$:
\neweq{scann}
I\, [\ddot{\theta}(t)+2\zeta_\theta\omega_\theta \dot{\theta}(t)+\omega_\theta^2\theta(t)]=A\dot{\theta}(t)+B\theta(t)\ ,
\endeq
where $I$, $\zeta_\theta$, $\omega_\theta$ are, respectively, associated inertia, damping ratio, and natural frequency. The r.h.s.\ of \eq{scann}
represent the aerodynamic force which is postulated to depend linearly on both $\dot{\theta}$ and $\theta$ with $A,B>0$ depending on the structural
parameters of the bridge. Let us mention that the arguments used in \cite{scantom} to reach the l.h.s.\ of \eq{scann} have been the object of severe
criticisms (see \cite{mckdcds}), due to some rough approximations and questionable arguments. Nevertheless, the r.h.s.\ of \eq{scann} is nowadays recognized
as a satisfactory description of aerodynamic forces.\par
Following these suggestions, we insert the aerodynamic forces in the 1-mode system \eq{unmodo}. We first consider the case where only
the cross-derivatives are involved. This leads to the following modified system:
\neweq{aerforce}
\left\{\begin{array}{ll}
\ddot{y}_1+3y_1+\frac32 y_1^3+\frac92 y_1z_1^2+\delta\dot{z}_1=0\\
\ddot{z}_1+7z_1+\frac92 z_1^3+\frac{27}{2}z_1y_1^2+\delta\dot{y}_1=0
\end{array}\right.
\end{equation}
with $\delta>0$. As in \eq{numinitial}, we take the initial conditions
\neweq{numinitial3}
y_1(0)=\sigma=10^4z_1(0)\, ,\quad \dot{y}_1(0)=\dot{z}_1(0)=0
\endeq
for different values of $\sigma$ and we wish to highlight the differences, if any, between \eq{unmodo} and \eq{aerforce}. For \eq{aerforce} we have no
energy conservation; however, let us consider the (variable) energy function
\neweq{variablenergy}
E(t)=\frac{\dot{y}_1^2}{2}+\frac{\dot{z}_1^2}{6}+\frac{3}{2}y_1^2+\frac{7}{6}z_1^2+\frac{9}{4}y_1^2z_1^2+\frac{3}{8}(y_1^4+z_1^4)\,.
\endeq

Let us now consider the case where also the cross-terms of order 0 are involved. Then, instead of \eq{aerforce} we obtain the system
\neweq{aerforce22}
\left\{\begin{array}{ll}
\ddot{y}_1+3y_1+\frac32 y_1^3+\frac92 y_1z_1^2+\delta(\dot{z}_1+z_1)=0\\
\ddot{z}_1+7z_1+\frac92 z_1^3+\frac{27}{2}z_1y_1^2+3\delta(\dot{y}_1+y_1)=0
\end{array}\right.
\end{equation}
where the coefficient 3 in the second equation comes from the variation of the energy
\neweq{altraenergia}
E(t)=\frac{\dot{y}_1^2}{2}+\frac{\dot{z}_1^2}{6}+\frac{3}{2}y_1^2+\frac{7}{6}z_1^2+\frac{9}{4}y_1^2z_1^2+\frac{3}{8}(y_1^4+z_1^4)+\delta y_1z_1\,.
\endeq
Also for \eq{aerforce22} we do not have energy conservation but the function $E$ in \eq{altraenergia} better approximates the internal energy.
It may be questionable whether to include the last term $\delta y_1z_1$ into $E$ since this term depends on the aerodynamic forces.
However, the behavior of $E$, which we analyze in the next section, does not depend on the presence of this term.

\section{Numerical results}\label{numres}

For \eq{aerforce} we first take $\sigma=1.47$ and we modify the aerodynamic parameter $\delta$.  To motivate this choice we note that no energy transfer seems to occur for $\sigma$ below this threshold, furthermore $\sigma=1.47$ is also the numerical threshold found when no aereodynamic force is inserted in the model, see
Section \ref{One}. In Figures \ref{aerforcefig} and \ref{aerforcefig10}
we plot both the behavior of the solutions (first line) and the behavior of the energy $E(t)$ (second line), for increasing values of $\delta$.
\begin{figure}[ht]
\begin{center}
{\includegraphics[height=26mm, width=52mm]{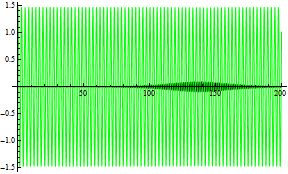}}\qquad\qquad {\includegraphics[height=26mm, width=52mm]{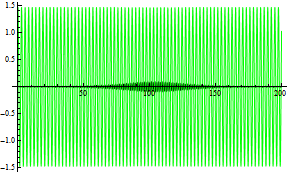}}

{\includegraphics[height=26mm, width=52mm]{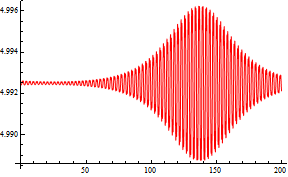}}\qquad\qquad {\includegraphics[height=26mm, width=52mm]{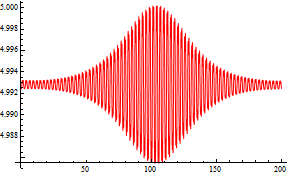}}
\caption{{\small For $t\in[0,200]$: the solutions $y_1$ (green) and $z_1$ (black) of \eq{aerforce}-\eq{numinitial3} for $\sigma=1.47$ and
$\delta=0.01,\, 0.02$ (left to right, first line). Second line (red): the energy $E=E(t)$ defined in \eq{variablenergy}.}}\label{aerforcefig}
\end{center}
\end{figure}

\begin{figure}[ht]
\begin{center}
{\includegraphics[height=26mm, width=52mm]{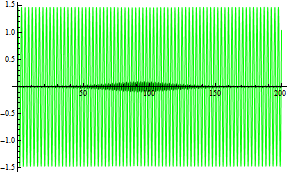}}\qquad\qquad {\includegraphics[height=26mm, width=52mm]{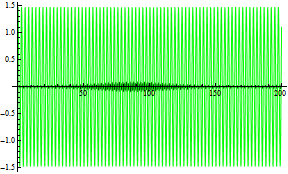}}

{\includegraphics[height=26mm, width=52mm]{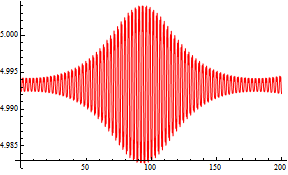}}\qquad\qquad {\includegraphics[height=26mm, width=52mm]{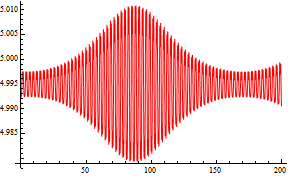}}
\caption{{\small For $t\in[0,200]$: the solutions $y_1$ (green) and $z_1$ (black) of \eq{aerforce}-\eq{numinitial3} for $\sigma=1.47$ and
$\delta=0.03,\, 0.05$ (left to right, first line). Second line (red): the energy $E=E(t)$ defined in \eq{variablenergy}.}}\label{aerforcefig10}
\end{center}
\end{figure}

The first lines in Figures \ref{aerforcefig} and \ref{aerforcefig10} should be compared with the second picture in Figure \ref{tantefig} (case $\delta=0$).
We note that, as the aerodynamic parameter increases, the transfer of energy is anticipated but it is not amplified. Quite surprisingly, on the second line we
see that the energy $E(t)$ remains almost constant except in the interval of time where the transfer of energy occurs: for increasing aerodynamic parameters
$\delta$ we observe increasing variations in the energy behavior.\par
Then we maintain fixed $\delta=0.01$ and we increase the initial energy, that is, the initial amplitude of oscillation. In Figures \ref{aerforcefig2} and
\ref{aerforcefig22} we plot both the behavior of the solutions (first line) and the behavior of the energy $E(t)$ (second line), for increasing values of $\sigma$.
\begin{figure}[ht]
\begin{center}
{\includegraphics[height=26mm, width=52mm]{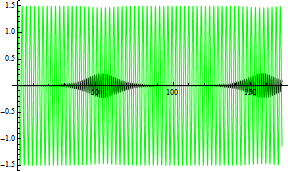}}\qquad\qquad {\includegraphics[height=26mm, width=52mm]{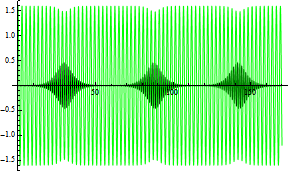}}

{\includegraphics[height=26mm, width=52mm]{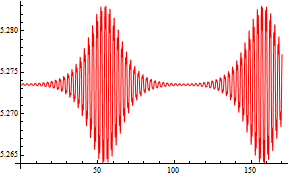}}\qquad\qquad {\includegraphics[height=26mm, width=52mm]{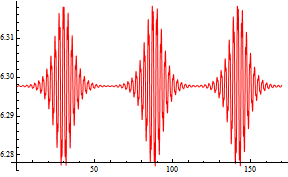}}
\caption{{\small For $t\in[0,170]$: the solutions $y_1$ (green) and $z_1$ (black) of \eq{aerforce}-\eq{numinitial3} for $\delta=0.01$
and $\sigma=1.5,\, 1.6$ (left to right, first line). Second line (red): the energy $E=E(t)$ defined in \eq{variablenergy}.}}\label{aerforcefig2}
\end{center}
\end{figure}

\begin{figure}[ht]
\begin{center}
{\includegraphics[height=26mm, width=52mm]{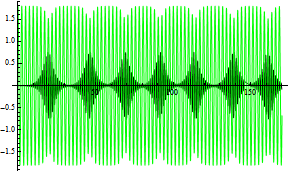}}\qquad\qquad {\includegraphics[height=26mm, width=52mm]{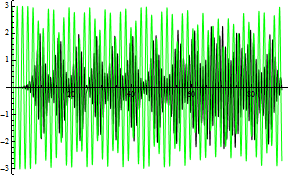}}

{\includegraphics[height=26mm, width=52mm]{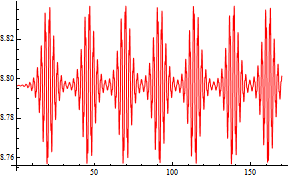}}\qquad\qquad {\includegraphics[height=26mm, width=52mm]{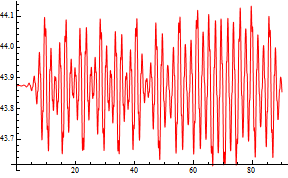}}
\caption{{\small For $t\in[0,170]$: the solutions $y_1$ (green) and $z_1$ (black) of \eq{aerforce}-\eq{numinitial3} for $\delta=0.01$
and $\sigma=1.8,\, 3$ (left to right, first line). Second line (red): the energy $E=E(t)$ defined in \eq{variablenergy}.}}\label{aerforcefig22}
\end{center}
\end{figure}
It turns out that all the phenomena are anticipated (in time) and amplified (in width) and reach a quite chaotic behavior for $\sigma=3$ where we had to stop
the numerical integration at $t=90$.\par
For \eq{aerforce22} we take again as initial conditions \eq{numinitial3} but with $\sigma\ge1.47$ so that we are above the energy threshold of instability,
see Section \ref{One}. In Figure \ref{newaerforce} we plot both the behavior
of the solution (first line) and the behavior of the energy $E(t)$ (second line) of \eq{aerforce22}-\eq{numinitial3}.
\begin{figure}[ht]
\begin{center}
{\includegraphics[height=26mm, width=52mm]{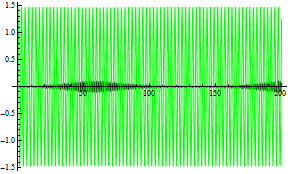}}\qquad\qquad {\includegraphics[height=26mm, width=52mm]{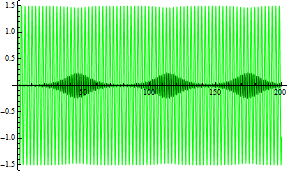}}

{\includegraphics[height=26mm, width=52mm]{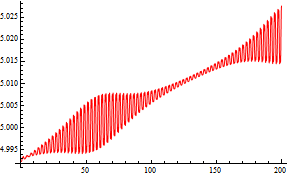}}\qquad\qquad {\includegraphics[height=26mm, width=52mm]{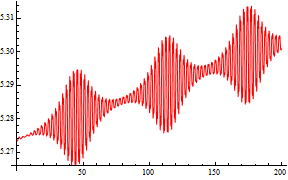}}
\caption{{\small For $t\in[0,200]$: the solutions $y_1$ (green) and $z_1$ (black) of \eq{aerforce22}-\eq{numinitial3} for $\delta=0.01$ and
$\sigma=1.47,\ 1.5$ (left to right, first line). Second line (red): the energy $E=E(t)$ defined in \eq{altraenergia}.}}\label{newaerforce}
\end{center}
\end{figure}

It is quite visible that the instability is further anticipated but now also the amplitude is enlarged. Moreover, the energy increases also
in absence of torsional instability: this variation is due to the cross-derivatives since all the other terms appear in the energy \eq{altraenergia}.
The very same behavior is obtained for the internal energy, namely the energy \eq{altraenergia} without the last term $\delta y_1z_1$. We also
remark that the energy $E$ fails to follow a regular pattern only in presence of instability. We only quote these numerical results because all
the other experiments gave completely similar responses.

\section{Theoretical results}
As pointed out by Irvine \cite[p.176]{irvine}, the numerical approach is probably the most appropriate to analyze a model which also involves
aerodynamic forces. The reason is that a satisfactory stability theory for systems such as \eq{aerforce} and \eq{aerforce22} is not available.
Nevertheless, some theoretical conclusions can be drawn also for these systems, in the spirit of the results obtained in \cite{bergaz} (see also
Section \ref{One}), where the main idea is to study the solutions of the systems near the pure vertical mode $\bar y$.
Here ``pure'' means that no interactions with the surrounding are admitted and only the structural behavior of the bridge is considered. Let us explain
how some of the results for the isolated system \eq{unmodo} may be extended to \eq{aerforce}; one can then proceed similarly for \eq{aerforce22}.\par
For system \eq{unmodo}, two steps are necessary to define the torsional stability of the unique (periodic) solution $\bar y$ of \eq{soloy}
(the pure mode):\par
(i) we linearize the torsional equation of the system \eq{unmodo} around $(\bar y,0)$, see \eq{nonlinearhill};\par
(ii) we say that the pure vertical mode $\bar y$ at energy $E(\eta_0,\eta_1)$ is torsionally stable if the trivial solution of \eq{nonlinearhill}
is stable.\par
We point out that the system \eq{unmodo} is {\em isolated} and that \eq{nonlinearhill} is {\em unforced}. In this situation, the above steps (i)-(ii) are
equivalent to:\par
(I) in the torsional equation of the system \eq{unmodo} we drop all the $z_1$-terms of order greater than or equal to one and we replace $y_1$
with $\bar y$;\par
(II) we say that the pure vertical mode $\bar y$ at energy $E(\eta_0,\eta_1)$ is torsionally stable if all the solutions of \eq{nonlinearhill} are
globally bounded.\par
If we replace \eq{unmodo} with the system \eq{aerforce}, then (i)-(ii) make no sense while (I)-(II) do. A linearization as in (i) would exclude the
aerodynamic forces, while acting as in (I) preserves them and gives rise to the forced Hill equation
\neweq{complete}
\ddot{\xi}+a(t)\xi=f(t)\quad\mbox{with}\quad a(t)=7+\frac{27}{2}\overline{y}(t)^2 \text { and }f(t)=-\delta \dot{\bar y}(t)\,,
\endeq
where $\bar y$ is the unique periodic solution of \eqref{soloy}. Needless to say, $f$ and $a$ have the same period. The definition (ii) is inapplicable
to \eq{complete} since $\xi\equiv0$ is not a solution, while (II) is a verifiable property for \eq{complete}. This is the definition of stability that
we adopt for the vertical mode $\bar y$ in presence of aerodynamic forces. With this definition we can prove the following statement.

\begin{proposition}\label{stable aero}
Let $\overline{y}$ be the pure vertical mode at energy $E(\eta_0,\eta_1)$, that is, the solution of \eqref{soloy}. Then $\bar y$ is torsionally stable
for \eqref{unmodo} if and only if it is torsionally stable for \eqref{aerforce}.
\end{proposition}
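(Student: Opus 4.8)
The plan is to reduce everything to a comparison between the unforced Hill equation \eqref{nonlinearhill} and the forced one \eqref{complete}, which share the same coefficient $a(t)=7+\frac{27}{2}\bar y(t)^2$. By the very definitions (I)--(II) adopted above, torsional stability for \eqref{unmodo} means global boundedness of all solutions of \eqref{nonlinearhill}, while torsional stability for \eqref{aerforce} means global boundedness of all solutions of \eqref{complete}. I would first exploit linearity: any solution of \eqref{complete} is the sum of one fixed particular solution and a solution of \eqref{nonlinearhill}, while the difference of two solutions of \eqref{complete} solves \eqref{nonlinearhill}. This gives at once that stability of \eqref{complete} implies stability of \eqref{nonlinearhill} (the differences realize every homogeneous solution and are bounded), and that, conversely, if \eqref{nonlinearhill} is stable then \eqref{complete} is stable \emph{if and only if} it admits at least one bounded solution. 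Thus the whole statement reduces to the single implication: \emph{stability of \eqref{nonlinearhill} forces \eqref{complete} to possess a bounded solution}.

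To produce such a solution I would pass to Floquet theory. Writing \eqref{nonlinearhill} in first-order form over the period $T$ of $\bar y$ and letting $M$ (with $\det M=1$) be its monodromy matrix, stability of \eqref{nonlinearhill} is equivalent to $\sup_n\|M^n\|<\infty$. The time-$T$ map of \eqref{complete} is then the affine map $X\mapsto MX+b$, with $b=\int_0^T \Phi(T)\Phi(s)^{-1}F(s)\,ds$ and $F=(0,f)^{\top}$, so that a bounded solution of \eqref{complete} exists precisely when the orbit $X_{n}=M^{n}X_0+\sum_{k=0}^{n-1}M^{k}b$ stays bounded for some $X_0$.

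Next I would split into cases. If $1$ is not a Floquet multiplier (equivalently $\operatorname{tr}M\neq2$), then $I-M$ is invertible and its fixed point $X^{\ast}=(I-M)^{-1}b$ yields a $T$-periodic, hence bounded, solution of \eqref{complete}; combined with stability of \eqref{nonlinearhill} this closes this case. The only remaining possibility under stability is the resonant one $\operatorname{tr}M=2$, which then forces $M=I$: every solution of \eqref{nonlinearhill} is $T$-periodic and a bounded solution of \eqref{complete} exists if and only if $b=0$, i.e. if and only if $f$ is orthogonal over $[0,T]$ to the periodic solutions of \eqref{nonlinearhill}. Here I would invoke the symmetries of the pure mode: normalizing $\bar y$ to be even with $\bar y(t+T/2)=-\bar y(t)$, the coefficient $a=7+\frac{27}{2}\bar y^2$ is even and $T/2$-periodic while the forcing $f=-\delta\dot{\bar y}$ is odd and $T/2$-antiperiodic, and I would use these parities together with the antiperiodicity of $f$ to force the period integrals, and hence $b$, to vanish.

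I expect the resonant case to be the main obstacle. Exactly at such coexistence energies the forced problem could in principle develop secular growth, and whether it does is governed by the half-period monodromy $N=\Phi(T/2)$ (with $M=N^2$, so that $M=I$ forces $N=\pm I$). When $N=I$ the antiperiodicity of $f$ alone gives $b=0$; the delicate sub-case is $N=-I$, where the even homogeneous solution is still annihilated by the odd symmetry of $f$ but the odd one is not, so that vanishing of the remaining period integral is not automatic. Controlling this sub-case is the crux: it requires the finer reflection symmetry of $\bar y$ about $t=T/4$, or else a separate argument showing that such a coexistence does not occur for the specific coefficient $a=7+\frac{27}{2}\bar y^2$.
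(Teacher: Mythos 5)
Your overall architecture coincides with the paper's: reduce both notions of torsional stability to the pair of Hill equations \eqref{nonlinearhill} and \eqref{complete} with the same coefficient $a$, obtain the ``only if'' direction from linear superposition (differences of forced solutions realize every homogeneous solution, exactly as in the paper's second paragraph), and obtain the ``if'' direction by exhibiting a bounded particular solution of \eqref{complete}. In the generic case your version is in fact tighter than the paper's. The paper writes the stable homogeneous solutions in the Floquet form $A e^{i\beta_1 t}\varphi_1+Be^{i\beta_2 t}\varphi_2$ with $e^{i\beta_j T}\neq\pm1$, applies variation of constants, and concludes boundedness of the particular term solely from the bound $|f(t)|\le\delta\sqrt{2E}$ --- a step that is too quick as written, since a bounded integrand does not bound $\int_0^t$; what actually saves that argument is precisely the nonresonance $e^{i\beta_j T}\neq 1$, which keeps the frequencies of the quasi-periodic integrand away from zero. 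Your affine Poincar\'e map with fixed point $X^{\ast}=(I-M)^{-1}b$ when $\operatorname{tr}M\neq2$ (a case split that also absorbs $M=-I$) replaces that estimate by an exact $T$-periodic particular solution, which is cleaner and fully rigorous there.

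The genuine gap is exactly where you flag it, but the repair you gesture at cannot work. In the coexistence sub-case $M=I$, $N=-I$, normalize $\bar y$ even with $\bar y(t+T/2)=-\bar y(t)$; then $\bar y$ is odd about $t=T/4$, and a short computation shows that both $f=-\delta\dot{\bar y}$ and the odd basis solution $\xi_2$ (which here satisfies $\xi_2(t+T/2)=-\xi_2(t)$, hence $\xi_2(T/2-t)=\xi_2(t)$) are \emph{even} about $T/4$. Consequently $\int_0^T\xi_2f\,ds=2\int_0^{T/2}\xi_2f\,ds$ has an integrand that is even about the midpoint $T/4$ of the interval of integration, so the ``finer reflection symmetry of $\bar y$ about $t=T/4$'' produces no cancellation whatsoever; closing this sub-case would instead require either an explicit evaluation of $\int_0^T\xi_2 f$ or a proof that coexistence ($M=I$) never occurs for $a=7+\tfrac{27}{2}\bar y^2$ along the family of energies, and neither is supplied. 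You should be aware, however, that the paper's own proof assumes this case away silently: its Floquet representation \eqref{xiAB} postulates $\beta_1\neq\beta_2$ real with $e^{i\beta_j T}\neq\pm1$, i.e., ellipticity of the monodromy, which excludes $M=\pm I$ by fiat. So judged against the published argument your proposal is at least as complete, and has the merit of isolating the exceptional case explicitly; judged as a standalone proof of the proposition as stated, the resonant sub-case $N=-I$ remains open.
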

\begin{proof} Assume that $\bar y$ is torsionally stable for \eq{unmodo}, then the trivial solution of \eq{nonlinearhill} is stable (condition (i)) or,
equivalently, all the solutions of \eq{nonlinearhill} are globally bounded (condition (II)). Let $T$ be the period of $a(t)$, from the classical Floquet
theory any (stable) solution of \eq{nonlinearhill} may be written as
\neweq{xiAB}
\xi_h(t)=A\xi_1(t)+B\xi_2(t)=A e^{i \beta_1 t} \varphi_1(t)+B e^{i\beta_2 t} \varphi_2(t)\,,
\endeq
where $A,B\in \R$, $\beta_1 \neq \beta_2 $ are real numbers such that $e^{i \beta_j T}  \neq \pm 1$ ($j=1,2$) and $\varphi_1$ and $\varphi_2$
are $T$-periodic functions. By the variation of constants formula we then find that any solution of \eq{complete} takes the form
$$
\xi(t)=\xi_h(t)-\frac{1}{c^2}\Big[e^{i \beta_1 t} \varphi_1(t) \int_0^t e^{i \beta_2 s} \varphi_2(s)f(s)\,ds-e^{i \beta_2 t} \varphi_2(t)
\int_0^t e^{i \beta_1 s} \varphi_1(s)f(s)\,ds\Big]
$$
where $c^2$ is the (constant) Wronskian of $\xi_1$ and $\xi_2$, namely $c^2=\xi_1(t)\xi_2'(t)-\xi_2(t)\xi_1'(t)$. From \cite[Formula (53)]{bergaz} we
know that $|\dot{\bar y}(t)| \leq \sqrt{2E}$ for every $t>0$, hence $|f(t)| \leq \delta  \sqrt{2E}$ for every $t>0$. This proves the boundedness of the
general solution $\xi$ and, in turn, the torsional stability of $\bar y$ for \eq{aerforce}.\par
Assume now that $\bar y$ is torsionally stable for \eq{aerforce}. Then $\xi$ is bounded for any choice of $\xi_h$, that is, any choice of the constants
$A$ and $B$ in \eq{xiAB}. This shows that also $\xi_h$ is bounded and proves the stability of $\bar y$ for \eq{unmodo}. \end{proof}

This statement deserves a couple of straightforward comments.\par
$\bullet$ In agreement with the numerical results described in Section \ref{numres}, Proposition \ref{stable aero} shows that the energy threshold
for stability does not depend on the strength of the aerodynamic forces; in particular, an isolated system has the same energy threshold.\par
$\bullet$ By applying Propositions \ref{stable} and \ref{stable aero} we infer that the energy threshold for the stability of \eq{aerforce} is at least
$\frac{235}{294}$.\par

\section{Conclusions}\label{conclusion}

It is clear that in absence of wind or external sources a bridge remains still. A vertical load, such as a vehicle, bends the bridge and creates a
bending energy. Less obvious is the way the wind inserts energy into the bridge: let us outline how this happens. When a fluid hits a bluff body its
flow is modified and goes around the body. Behind the body, or a ``hidden part'' of the body, the flow creates vortices which are, in general,
asymmetric. This asymmetry generates a forcing lift which starts the vertical oscillations of the body. Up to some minor details, this explanation
is shared by the whole community and it has been studied with great precision in wind tunnel tests, see e.g.\ \cite{larsen,scott}.\par
The vortices induced by the wind increase the internal energy of the structure by generating wide vertical oscillations. When the amount of
energy reaches a critical threshold our results in \cite{bergaz} show that a structural instability appears: this is the onset of torsional oscillations.
The results in the present paper show that, at this stage, the aerodynamic forces excite the internal energy irregularly giving rise to further self-excited
oscillations.\par
The whole energy-oscillation mechanism is here described through a very simplified model which certainly needs to be significantly improved.
But, at least qualitatively, we believe that the ``true'' mechanism in a suspension bridge will follow this pattern:\par\noindent
1) the interaction of the wind with the structure creates vortices;\par\noindent
2) vortices create a lift which starts vertical oscillations of the bridge;\par\noindent
3) when vertical oscillations are sufficiently large, torsional oscillations may appear;\par\noindent
4) the onset of torsional instability is of structural nature;\par\noindent
5) the aerodynamic forces excite the energy only when the structural torsional instability appears;\par\noindent
6) the energy threshold of stability is independent of the strength of aerodynamic forces.

\section*{Acknowledgments}
\noindent
This work is supported by the Research Project FIR (Futuro in Ricerca) 2013 \emph{Geometrical and qualitative aspects of PDE's} and
by the PRIN project {\em Equazioni alle derivate parziali di tipo ellittico e parabolico: aspetti geometrici, disuguaglianze collegate, e applicazioni}.
The authors are members of the Gruppo Nazionale per l'Analisi Matematica, la Probabilit\`a e le loro Applicazioni (GNAMPA) of the Istituto Nazionale di Alta Matematica (INdAM).

\end{document}